\def\eqnarray{\stepcounter{equation}\let\@currentlabel=\theequation
\global\@eqnswtrue
\global\@eqcnt\z@\tabskip\@centering\let\\=\@eqncr
$$\halign to \displaywidth\bgroup\@eqnsel\hskip\@centering
  $\displaystyle\tabskip\z@{##}$&\global\@eqcnt\@ne 
  \hfil$\;{##}\;$\hfil
  &\global\@eqcnt\tw@ $\displaystyle\tabskip\z@{##}$\hfil 
   \tabskip\@centering&\llap{##}\tabskip\z@\cr}
\newtheorem{thm}{Theorem}[section]
\newtheorem{lem}{Lemma}[section]
\newtheorem{rem}{Remark}[section]{\itshape}{\rmfamily}
\newenvironment{proof}{\noindent{\it Proof.~~}}{\qed\medskip}
    \renewcommand{\theequation}{%
    \thesection.\arabic{equation}}
\newcommand{\vc}{\bm}
\newcommand{\down}[2]{\smash{\lower#2\hbox{#1}}}
\newcommand{\up}[2]{\smash{\lower-#2\hbox{#1}}}
\newcommand{\qed}{\hspace*{\fill}$\Box$\rule[-10pt]{0pt}{10pt}}
\newcommand{\PP}{\mathsf{P}}
\newcommand{\bbD}{\mathbb{D}}
\newcommand{\bbF}{\mathbb{F}}
\newcommand{\bbL}{\mathbb{L}}
\newcommand{\bbK}{\mathbb{K}}
\newcommand{\bbN}{\mathbb{N}}
\newcommand{\bbZ}{\mathbb{Z}}
\newcommand{\rmd}{{\rm d}}
\newcommand{\rme}{{\rm e}}
\newcommand{\ol}{\overline}
\newcommand{\ul}{\underline}
\newcommand{\wh}{\widehat}
\newcommand{\dd}[1]{\if#11 1\!\!1 
\else {\if#1C I\!\!\!C
\else {\if#1G I\!\!\!G 
\else {\if#1J J\!\!\!J 
\else {\if#1S S\!\!\!S
\else {\if#1Z Z\!\!\!Z
\else {\if#1Q O\!\!\!\!Q
\else I\!\!#1
\fi} 
\fi}
\fi}
\fi} 
\fi} 
\fi} 
\fi} 
\begin{document}\thispagestyle{plain} 

\hfill
{\small Last update date: \today}

{\Large{\bf
\begin{center}
The stability condition of BMAP/M/{\LARGE$\infty$} queues
\end{center}
}
}

\begin{center}
{
Moeko Yajima\footnote[1]{Email:~yajima.m.ad@m.titech.ac.jp}, ~~{\it \footnotesize Tokyo Institute of Technology}

Tuan Phung-Duc\footnote[2]{Email:~tuan@sk.tsukuba.ac.jp},~~{\it \footnotesize University of Tsukuba} 

Hiroyuki Masuyama\footnote[3]{Email:~masuyama@sys.i.kyoto-u.ac.jp},~~{\it \footnotesize Kyoto University}
}

\bigskip
\medskip

{\small
\textbf{Abstract}

\medskip

\begin{tabular}{p{0.85\textwidth}}
This paper considers a BMAP/M/$\infty$ queue with a batch Markovian
arrival process (BMAP) and an exponential service time
distribution. We first prove that the BMAP/M/$\infty$ queue is stable
if and only if the expectation of the logarithm of the batch-size
distribution is finite. Using this result, we also present the
stability condition for an infinite-server queue with a multiclass
batch Markovian arrival process and class-dependent exponential
service times.
\end{tabular}
}
\end{center}

\begin{center}
\begin{tabular}{p{0.90\textwidth}}
{\small
{\bf Keywords:} %
Batch Markovian arrival process (BMAP);
infinite-server queue;
stability condition;
Foster's theorem
%
%


%
}
\end{tabular}

\end{center}

\section{Introduction}

Infinite-server queues have many applications in various areas, such
as inventory systems \cite{Berm99}, road traffic systems \cite{Woen07}
and telecommunication systems \cite{Mass02}. Thus, many researchers have studied stationary and/or time-dependent infinite-server queues (see, e.g., \cite{Chatterjee89,Holm83,Keilson88,Rama80,Shanbhag66,Taka80} and the references therein). However, almost all the previous works paid little
attention to the {\it stability condition} of infinite-server queues,
that is, the necessary and sufficient condition that there exists the
unique stationary distribution of the queue length process (i.e., the
stochastic process of the number of busy servers). This would be
because infinite-server queues with individual arrivals are always
stable.  On the other hand, infinite-server queues with batch arrivals
are not always stable (see e.g., Cong~\cite{Cong94}).

As far as we know, all the previous works, except Cong~\cite{Cong94},
have studied stationary infinite-server queues with batch arrivals,
assuming sufficient conditions for stability (e.g., the first two
moments of the batch-size distribution are finite) or the existence of
the stationary queue length distribution.

Holman \textit{et al.} \cite{Holm83} derived some formulas for the
mean and variance of the stationary queue length distribution in the
M${}^X$/G/$\infty$ queue, under the assumption that the first two moments of the batch-size distribution are finite.  Keilson and
Seidmann~\cite{Keilson88} assumed that the M${}^X$/G/$\infty$ queue is
stable and then proved that the stationary queue length distribution
is a compound Poisson distribution under an additional condition.
Breuer{~}\cite{Breu12} derived the necessary and sufficient condition
that the mean stationary queue length in the
BMAP/G/$\infty$ queue is finite.

As for the multiclass case, Liu and Templeton{~}\cite{LiuLim91}
considered an infinite-server queue (referred to as the
GR${}^{X_n}$/G${}_n$/$\infty$ queue therein), where the arrival times
and types of customers are governed by a Markov renewal process and
the batch sizes of customers depend on their types. For the
GR${}^{X_n}$/G${}_n$/$\infty$ queue, Liu and
Templeton{~}\cite{LiuLim91} derived the probability generating
function of the stationary queue length distribution under the
assumption that all the moments of the batch-size distribution are
finite.  Masuyama and Takine{~}\cite{Masu02} derived explicit and
numerically feasible formulas for the stationary joint queue length
moments in an infinite-server queue with a multiclass batch Markovian
arrival process and class-dependent phase-type service times, assuming
that the stationary joint queue length distribution exists.

Unlike the previous works mentioned above, Cong~\cite{Cong94} paid an
attention to the stability condition for infinite-server queues with
batch arrivals.  In fact, Cong~\cite{Cong94} established the stability
condition of the multiclass M${}^X$/M/$\infty$ queue, where customers
arrive according to a multiclass batch Poisson process and
class-dependent exponential service times.  Cong~\cite{Cong94}'s
stability condition is that the first logarithmic moment of the
batch-size distribution is finite, i.e., the mean value of the
logarithm of the batch size is finite. For convenience, we refer to
the stability condition of this type as the {\it logarithmic moment condition}.

The main purpose of this paper is to prove that the logarithmic moment
condition is the stability condition of the BMAP/M/$\infty$ queue,
which includes the M${}^X$/M/$\infty$ queue as a special case. Using
Foster's theorem (see, e.g., \cite[Chapter~5, Theorem 1.1]{Brem99}),
we prove that the logarithmic moment condition is sufficient for the
stability of the BMAP/M/$\infty$ queue. We also show the necessity of
the logarithmic moment condition for stability in a similar way to
Cong~\cite{Cong94}. In addition, combining these results with the
stochastic ordering technique, we prove that the logarithmic moment
condition is the stability condition of a multiclass BMAP/M/$\infty$
queue, where customers arrive according to a multiclass batch
Markovian arrival process (MBMAP) and service times of customers are independently distributed with class-dependent exponential distributions.

The reminder of this paper is organized as
follows. Section~\ref{sec-Model} describes the BMAP/M/$\infty$
queue. Sections~\ref{Stability Condition} and \ref{sec-Multi} discuss
the stability condition for the BMAP/M/$\infty$ queue and the
multiclass BMAP/M/$\infty$ queue. Finally, Section~\ref{Contribution}
is devoted to concluding remarks and future work.
\section{Model Description}\label{sec-Model}

In this section, we describe the BMAP/M/$\infty$ queue. This queueing model has infinite servers, where customers arrive according
to a batch Markovian arrival process (BMAP) \cite{Luca91}.  The BMAP
includes various arrival processes as special cases, e.g., a batch
Poisson arrival process, a Phase-type (PH) renewal process
\cite{Lato99}, a Markovian arrival process (MAP) \cite{Luca90}. Note
here that the MAP is an special case of BMAPs such that arrivals occur
one by one.  It is known \cite{Asmu93} that any simple point process
is the weak limit of a sequence of MAPs.

The BMAP is controlled by an irreducible time-homogeneous Markov chain
$\{J(t);t\geq 0\}$ in continuous time with finite state space
$\bbD:=\{1,2,\dots,d\}$, which is called the background Markov chain.
Let $N(t)$, $t \ge 0$, denote the total number of customers arriving
from the BMAP during the time interval $(0,t]$, where $N(0)=0$. We
  assume that, for $k\in\bbZ_+:=\{0,1,\dots\}$ and $i,j\in\bbD$,
\begin{eqnarray*}
\lefteqn{
\PP(N(t+\Delta t)-N(t)=k,J(t+\Delta t)=j\ |\ J(t)=i)
}\quad&&
\\
&=&\left\{\begin{array}{ll}
1+D_{i,i}(0)\Delta t + o(\Delta t),&k=0,\ i=j\in\bbD,\\
D_{i,j}(k)\Delta t + o(\Delta t),& \mbox{otherwise},
\end{array}\right.
\end{eqnarray*}
where $f(x)=o(g(x))$ represents $\lim_{x\downarrow 0}|f(x)|/|g(x)|=0$.
Note here that $\vc{D}(k):=(D_{i,j}(k))_{i,j\in\bbD}$,
$k\in\bbN:=\{1,2,\dots\}$, is a nonnegative matrix and that $D(0):=(D_{i,j}(0))_{i,j\in\bbD}$ is a diagonally dominant matrix with negative diagonal and nonnegative off-diagonal elements because of the irreducibility of the background Markov chain ${J(t)}$. Note also that $\vc{D}:=\sum_{k=0}^{\infty} \vc{D}(k)$ is
the infinitesimal generator of the background Markov chain
$\{J(t)\}$. To avoid triviality, we assume that
\begin{equation}
\sum_{k=1}^{\infty} \vc{D}(k)\vc{e}\neq\vc{0},
\label{sum-D(n)e}
\end{equation}
where $\vc{e}$ and $\vc{0}$ are the column vectors of $1$'s and $0$'s,
respectively.

It is obvious that the joint stochastic process $\{(N(t),J(t));t\geq
0\}$ is a Markov chain with state space $\bbZ_+ \times \bbD$, whose
infinitesimal generator is given by
\[
\bordermatrix{
        & 
\bbL(0) &
\bbL(1) &
\bbL(2) &
\bbL(3) &
\cdots
\cr
\bbL(0) & 
\vc{D}(0) &
\vc{D}(1) &
\vc{D}(2) &
\vc{D}(3) &
\cdots
\cr
\bbL(1)   & 
\vc{O}    &
\vc{D}(0) &
\vc{D}(1) &
\vc{D}(2) &
\cdots
\cr
\bbL(2)   & 
\vc{O}    & 
\vc{O}    &
\vc{D}(0) &
\vc{D}(1) &
\cdots
\cr
\vspace{-1mm}\bbL(3)   & 
\vc{O}    & 
\vc{O}    & 
\vc{O}    &
\vc{D}(0) &
\cdots
\cr
~~~\vdots    &
\vdots    &
\vdots    &
\vdots    &
\vdots    &
\ddots    &
},
\]
where $\vc{O}$ denotes the zero matrix and $\bbL(k) = \{k\} \times
\bbD$ for $k \in \bbZ_+$. As a result, the BMAP is characterized by
$\{\vc{D}(k);k\in\bbZ_+\}$ and thus is referred to as BMAP $\{\vc{D}(k);k\in\bbZ_+\}$.

Each arriving customer occupies one of the servers immediately after
its arrival, and leaves the system immediately after its service
completion.  The service times of customers are independently and identically distributed (i.i.d.) with the exponential distribution having mean $1/\mu\in (0,\infty)$. Therefore,
customers behave independently of each other once they enter the
system.

Let $L(t)$, $t \ge 0$, denote the number of customers in the system at
time $t$.  It then follows from the Markov property of the BMAP and
exponential service times that the stochastic process
$\{(L(t),J(t)); t\geq 0\}$ is a continuous-time Markov chain with
state space $\bbF:=\bbZ_+\times\bbD$. Let
$\vc{Q}:=(q(k,i;\ell,j))_{(k,i),(\ell,j)\in\bbF}$ denote the
infinitesimal generator of the Markov chain $\{(L(t),J(t))\}$. We then
have
\begin{eqnarray}
\vc{Q}=
\left(
\begin{array}{ccccc}
\vc{D}(0) &
\vc{D}(1) &
\vc{D}(2) &
\vc{D}(3) &
\cdots
\\
\mu\vc{I} 			&
\vc{\varLambda}_1(0)&
\vc{D}(1)			&
\vc{D}(2)			&
\cdots
\\
\vc{O}		&
2\mu\vc{I}	&
\vc{\varLambda}_2(0) &
\vc{D}(1)	&
\cdots
\\
\vc{O} &
\vc{O} &
3\mu\vc{I}&
\vc{\varLambda}_3(0)&
\cdots
\\
\vspace{-1mm}\vc{O}	&
\vc{O}	&
\vc{O}	&
4\mu\vc{I}&
\up{$\ddots$}{-0.6mm}
\\
\vdots	&
\vdots	&
\vdots	&
\ddots	&
\ddots
\end{array}
\right),
\label{defn-Q}
\end{eqnarray}
where $\vc{\varLambda}_k(0) = -k\mu\vc{I}+\vc{D}(0)$ for $k \in \bbN$
and $\vc{I}$ denotes the identity matrix.

\begin{rem}\label{rem-Q}
It follows from (\ref{sum-D(n)e}), (\ref{defn-Q}) and the
irreducibility of the background Markov chain $\{J(t)\}$ that $\vc{Q}$
is irreducible. Therefore, if there exists a stationary distribution
of the Markov chain $\{(L(t),J(t))\}$ (i.e., a
stationary probability vector of $\vc{Q}$), then it is unique and
positive (see, e.g., \cite[Chapter 8, Theorem~5.1]{Brem99}).
\end{rem}

\section{Stability Condition}\label{Stability Condition}

The main purpose of this section is to present a sufficient and necessary condition for the stability (i.e.,
ergodicity) of the BMAP/M/$\infty$ queue, described in the previous
section. 

The following theorem is the main result of this paper.
\begin{thm}\label{cond-batch}
The Markov chain $\{(L(t),J(t)); t\geq 0\}$ is ergodic
if and only if there exists some finite constant $C > 0$ such that
\begin{eqnarray}
\label{stability-cond}
\sum_{k=1}^{\infty}\log(k+\rme)\vc{D}(k)\vc{e} \le C\vc{e},
\end{eqnarray}
where $\rme$ is the Napier's constant.
\end{thm}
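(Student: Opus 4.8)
The plan is to establish sufficiency through Foster's theorem and necessity through a direct lower bound on an expected return time, following Cong. For the sufficiency direction I would take the Lyapunov test function $V(k,i)=\log(k+\rme)$, which is independent of the background coordinate $i$ and has finite sublevel sets on $\bbF$. Keeping $V$ independent of $i$ is deliberate: the pure background transitions through the off-diagonal part of $\vc{D}(0)$ then contribute nothing to the drift, and the background-shift part of the batch transitions also cancels since $\vc{D}\vc{e}=\vc{0}$, so no bounded state-dependent correction term is required.

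Applying $\vc{Q}$ to this $V$ gives
\[
(\vc{Q}V)(k,i)=k\mu\log\frac{k-1+\rme}{k+\rme}
+\sum_{m=1}^{\infty}(\vc{D}(m)\vc{e})_i\,\log\frac{k+m+\rme}{k+\rme}.
\]
The service term $k\mu\log\frac{k-1+\rme}{k+\rme}$ tends to $-\mu$ as $k\to\infty$. The decisive estimate is on the arrival term: from the elementary inequality $\log\frac{k+m+\rme}{k+\rme}\le\log(m+\rme)$, valid for every $k\ge0$, the arrival term is bounded above by $\sum_{m=1}^{\infty}\log(m+\rme)(\vc{D}(m)\vc{e})_i$, hence by $C$ under hypothesis (\ref{stability-cond}). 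Since each summand also tends to $0$ as $k\to\infty$ while remaining dominated by the summable sequence $\log(m+\rme)(\vc{D}(m)\vc{e})_i$, dominated convergence forces the whole arrival term to vanish as $k\to\infty$. Consequently $(\vc{Q}V)(k,i)\to-\mu$ as $k\to\infty$, uniformly in $i$ since $\bbD$ is finite; hence $(\vc{Q}V)(k,i)\le-\mu/2$ once $k$ exceeds some threshold $K$, while $(\vc{Q}V)(k,i)\le C$ throughout $\bbF$ (the service term is always nonpositive), so the drift is bounded on the finite exceptional set $\{0,1,\dots,K\}\times\bbD$. Foster's theorem then delivers ergodicity.

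For necessity I would argue by contraposition. Suppose (\ref{stability-cond}) fails, so $\sum_{m=1}^{\infty}\log(m+\rme)(\vc{D}(m)\vc{e})_{i^*}=\infty$ for some $i^*\in\bbD$; note this forces the total batch rate $\sum_{m=1}^{\infty}(\vc{D}(m)\vc{e})_{i^*}$ to be strictly positive. I will show the expected return time to $(0,i^*)$ is infinite, ruling out positive recurrence. Starting from $(0,i^*)$, with strictly positive probability the first event is a batch arrival from state $i^*$, and conditionally its size equals $m$ with probability proportional to $(\vc{D}(m)\vc{e})_{i^*}$. If this first batch has size $B$, then---because subsequent arrivals can only postpone emptying---the excursion back to $(0,i^*)$ lasts at least the time $M_B$ needed for all $B$ of those initial customers to finish service; being the maximum of $B$ i.i.d.\ exponentials, $\EE[M_B\mid B]=H_B/\mu$ with $H_B=\sum_{\ell=1}^{B}\ell^{-1}\ge\log(B+\rme)-1$. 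Averaging over the first-batch size yields a lower bound proportional to $\sum_{m=1}^{\infty}\log(m+\rme)(\vc{D}(m)\vc{e})_{i^*}=\infty$, so the expected return time to $(0,i^*)$ is infinite and the chain cannot be ergodic.

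I expect the sufficiency drift estimate to be the crux. The real content is not that the drift is eventually negative, but that the positive arrival contribution is controlled \emph{solely} by the logarithmic moment and, more importantly, that it decays to $0$ for large $k$ so that the constant service drift $-\mu$ prevails. The pairing of the inequality $\log\frac{k+m+\rme}{k+\rme}\le\log(m+\rme)$ with dominated convergence is precisely what lets the single function $\log(k+\rme)$ succeed with no state-dependent tuning; a faster-growing choice such as a power of $k$ would leave the arrival term uncontrollable under a mere logarithmic moment, which is why the logarithm is the natural scale and why (\ref{stability-cond}) is exactly the borderline condition.
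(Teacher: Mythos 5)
Your proof is correct. The sufficiency half is essentially the paper's own argument: the same Lyapunov function $\log(k+\rme)$, the same split of the drift into the service term $k\mu\log\bigl(1-\frac{1}{k+\rme}\bigr)\to-\mu$ and the arrival term, the same key bound $\log\bigl(1+\frac{m}{k+\rme}\bigr)\le\log(m+\rme)$, and the same use of dominated convergence to make the arrival term vanish for large $k$ before invoking Foster's theorem. The necessity half, however, takes a genuinely different route. The paper works analytically with the stationary distribution: from the balance equations it derives $\mu\,\frac{\rmd}{\rmd z}\wh{\vc{\pi}}(z)\vc{e}=\wh{\vc{\pi}}(z)\sum_{k\ge1}\frac{1-z^k}{1-z}\vc{D}(k)\vc{e}$, integrates over $(0,1)$, and uses $\int_0^1\frac{1-z^k}{1-z}\,\rmd z=\sum_{\ell=1}^k\ell^{-1}\ge\log(k+1)$ together with $\wh{\vc{\pi}}(z)\ge\vc{\pi}(0)>\vc{0}$ to conclude that the logarithmic moment is finite. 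You instead argue by contraposition with a direct sample-path bound: if the logarithmic moment diverges in some phase $i^*$, the mean return time to $(0,i^*)$ is bounded below by the expected maximum of the first batch's i.i.d.\ exponential service times, hence by a positive constant times $\sum_{m}\log(m+\rme)(\vc{D}(m)\vc{e})_{i^*}=\infty$, so the chain cannot be positive recurrent; your observations that the first jump is a size-$m$ batch with probability proportional to $(\vc{D}(m)\vc{e})_{i^*}$ and that later arrivals only delay emptying make this rigorous. Both arguments are sound. Yours is more elementary and probabilistic (and closer in spirit to Cong's original treatment of the batch-Poisson case), while the paper's generating-function computation yields as a by-product the explicit quantitative inequality $\vc{\pi}(0)\sum_{k\ge1}\log(k+\rme)\vc{D}(k)\vc{e}\le\frac{\mu\log(1+\rme)}{\log 2}\{1-\vc{\pi}(0)\vc{e}\}$ relating the logarithmic moment to the stationary probability of an empty system.
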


It follows from (\ref{stability-cond}) that the time average of the
logarithm of the number of customers arriving in a bacth is finite. 
Thus, Theorem~\ref{cond-batch} shows that the logarithmic moment condition (\ref{stability-cond}) is a sufficient and necessary condition for the stability of the BMAP/M/$\infty$ queue.

In the rest of this section, we separately prove the necessity and
sufficiency of the logarithmic moment condition (\ref{stability-cond}) for the stability of the BMAP/M/$\infty$ queue.
\subsection{Sufficient Condition}
\label{sec-Sufficient}

We begin with the following lemma.
\begin{lem}\label{lem-sufficiency}
For $(k,i),(\ell,j)\in\bbF$, let $\upsilon(k,i)$ and $1_K(\ell,j)$ denote
\begin{eqnarray*}
\upsilon(k,i)
&=& \log(k+\rme), \quad k\in\bbZ_+,\ i \in \bbD,
\\
1_K(\ell,j)
&=& \left\{\begin{array}{ll@{~}l}
\displaystyle 1, & \ell = 0,1,\dots,K,  & j\in\bbD,
\\
\displaystyle 0, & \ell = K+1,K+2,\dots,& j\in\bbD,
\end{array}\right.
\end{eqnarray*}
respectively. If (\ref{stability-cond}) holds, then there exist some
$\delta \in (0,\infty)$ and $K\in\bbZ_+$ such that
\begin{eqnarray}
\vc{Q}\vc{v} \le -\delta\vc{e} + (\delta + C)\vc{1}_K,
\label{lem1}
\end{eqnarray}
where $\vc{v} = (\upsilon(k,i))_{(k,i)\in\bbF}$ and
$\vc{1}_K=(1_K(\ell,j))_{({\ell},i)\in\bbF}$.
\end{lem}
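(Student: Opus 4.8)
The plan is to verify the Foster--Lyapunov drift inequality (\ref{lem1}) directly, by computing the action of the generator $\vc{Q}$ on the test function $\vc{v}$ in closed form and then controlling it separately on high and low levels. First I would read off the $(k,i)$-th row of $\vc{Q}$ from (\ref{defn-Q}). Because $\upsilon(k,i)=\log(k+\rme)$ is independent of the phase $i$, the $-k\mu\vc{I}$ part of the diagonal block $\vc{\varLambda}_k(0)=-k\mu\vc{I}+\vc{D}(0)$ pairs with the service rate $k\mu$ to level $k-1$, while the $\vc{D}(0)$ part pairs with the upward blocks $\vc{D}(m)$, $m\ge 1$. Using that $\vc{D}:=\sum_{m=0}^{\infty}\vc{D}(m)$ is a generator, so that $\sum_{j\in\bbD}D_{i,j}(0)=-\sum_{m=1}^{\infty}\sum_{j\in\bbD}D_{i,j}(m)$, these terms rearrange into increments, yielding for every $(k,i)\in\bbF$ (with the convention that the service term is absent when $k=0$)
\[
(\vc{Q}\vc{v})(k,i)
= k\mu\,[\upsilon(k-1,i)-\upsilon(k,i)]
+ \sum_{m=1}^{\infty}\sum_{j\in\bbD}D_{i,j}(m)\,[\upsilon(k+m,j)-\upsilon(k,i)].
\]
Since $\upsilon$ is increasing in its first argument, the service increment $\upsilon(k-1,i)-\upsilon(k,i)$ is nonpositive, while each arrival increment $\upsilon(k+m,j)-\upsilon(k,i)=\log\bigl(1+m/(k+\rme)\bigr)$ is nonnegative.

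Next I would derive the uniform bound $(\vc{Q}\vc{v})(k,i)\le C$ on all of $\bbF$, which supplies the estimate on the finite set $\{\ell\le K\}$. Dropping the nonpositive service term and using the elementary subadditivity $\log(k+m+\rme)\le\log(k+\rme)+\log(m+\rme)$ (valid for $k\ge 0$, $m\ge 1$ because $k+m+\rme\le(k+\rme)(m+\rme)$), each arrival increment is bounded by $\log(m+\rme)$, so the arrival sum is at most $\bigl[\sum_{m=1}^{\infty}\log(m+\rme)\vc{D}(m)\vc{e}\bigr]_i\le C$ by (\ref{stability-cond}). Hence $(\vc{Q}\vc{v})(k,i)\le C$ for every $(k,i)\in\bbF$.

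Then I would analyze the drift as $k\to\infty$. The service term equals $-k\mu\log\bigl((k+\rme)/(k-1+\rme)\bigr)$, which converges to $-\mu$, while the arrival term tends to $0$: for each fixed $m$ the increment $\log\bigl(1+m/(k+\rme)\bigr)\to 0$, and the sum is dominated by the summable bound $\log(m+\rme)$ guaranteed by (\ref{stability-cond}), so dominated convergence applies. As $\bbD$ is finite these limits are uniform in $i$, whence $(\vc{Q}\vc{v})(k,i)\to-\mu<0$ uniformly in $i$. Fixing any $\delta\in(0,\mu)$, there is then $K\in\bbZ_+$ with $(\vc{Q}\vc{v})(k,i)\le-\delta$ for all $k>K$ and $i\in\bbD$. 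Combining this with the bound $(\vc{Q}\vc{v})(k,i)\le C$ for $k\le K$ gives exactly (\ref{lem1}), since the right-hand side equals $-\delta+(\delta+C)=C$ on $\{\ell\le K\}$ and $-\delta$ on $\{\ell>K\}$.

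The main obstacle is the justification of the limit of the arrival term, that is, the interchange of the limit with the infinite summation over batch sizes $m$. This is precisely where the logarithmic moment condition (\ref{stability-cond}) is indispensable: it furnishes the summable dominating function $\log(m+\rme)$ needed for dominated convergence, and without it the arrival increments, though individually vanishing, could sum to a nonnegligible or even infinite contribution. The remaining ingredients---the closed-form drift, the monotonicity and subadditivity of $\log(\cdot+\rme)$, the asymptotics $k\mu\log\bigl((k+\rme)/(k-1+\rme)\bigr)\to\mu$, and the passage from pointwise to uniform convergence via finiteness of $\bbD$---are routine.
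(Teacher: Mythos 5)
Your proposal is correct and follows essentially the same route as the paper: the same generator--increment decomposition of $\vc{Q}\vc{v}$ (the paper phrases it via $\log(\ell+k+\rme)=\log(k+\rme)+\log\bigl(1+\ell/(k+\rme)\bigr)$ together with $\vc{D}\vc{e}=\vc{0}$), the same uniform bound by $C$ from $\log\bigl(1+\ell/(k+\rme)\bigr)\le\log(\ell+\rme)$ and (\ref{stability-cond}), and the same limiting argument combining $k\mu\log\bigl(1-1/(k+\rme)\bigr)\to-\mu$ with dominated convergence (dominated by $\log(\ell+\rme)\vc{D}(\ell)\vc{e}$) to get the drift $\le-\delta$ above a finite level $K$. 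The only differences are cosmetic bookkeeping in how $\delta$ is chosen.
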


\medskip
It is immediate from Lemma \ref{lem-sufficiency} and Foster's theorem
(see, e.g., \cite[Chapter 2, Statement 8]{Fali97}) that
(\ref{stability-cond}) is a sufficient condition for the ergodicity of the
irreducible generator $\vc{Q}$.

\medskip

{\sc Proof of Lemma~\ref{lem-sufficiency}.~\,} We define $\vc{y}(k)$, $k\in\bbZ_+$, as
\[
\vc{y}(k)
=\sum_{\ell=0}^{\infty}\vc{Q}(k;\ell)\vc{v}(\ell),
\quad k\in\bbZ_+,
\]
where $\vc{Q}(k;\ell)=(q(k,i;\ell,j))_{i,j\in\bbD}$ for
$k,\ell\in\bbZ_+$ and
\[
\vc{v}(k) =(\upsilon(k,i))_{i\in\bbD}=\log(k+\rme)\vc{e},
\quad
k\in\bbZ_+.
\]
We then have
\begin{eqnarray}
\label{y0}
\vc{y}(0) 
&=& \sum_{\ell=0}^{\infty}
\log(\ell+\rme)\vc{D}(\ell)\vc{e}
\nonumber
\\
&=& \vc{D}(0)\vc{e} 
+ \sum_{\ell=1}^{\infty} \log(\ell+\rme) \vc{D}(\ell)\vc{e}
\le C\vc{e},
\end{eqnarray}
where the last inequality follows from (\ref{stability-cond}) and
$\vc{D}(0)\vc{e} \le \vc{0}$.  We also have, for $k \in \bbN$,
\begin{eqnarray}
\vc{y}(k)
&=&k\mu[\vc{v}(k-1)-\vc{v}(k)]
+\sum_{\ell=0}^{\infty}\vc{D}(\ell)\vc{v}(\ell+k)
\nonumber
\\
&=& k\mu\log\Big(1-{1 \over k+\rme}\Big)\vc{e}
+ \sum_{\ell=0}^{\infty}\vc{D}(\ell)\log(\ell+k+\rme)\vc{e}.
\quad~
\label{yk-1}
\end{eqnarray}
Note here that
\[
\log(\ell+k+\rme)
= \log(k+\rme) + \log\Big(1+{\ell \over k+\rme}\Big),
\quad k,\ell \in \bbZ_+.
\]
Using this equation and $\vc{D}\vc{e}=\vc{0}$, we obtain
\begin{eqnarray*}
\lefteqn{
\sum_{\ell=0}^{\infty} \log(\ell+k+\rme) \vc{D}(\ell)\vc{e}
}\quad&&
\\
&=&\log(k+\rme)\sum_{\ell=0}^{\infty}\vc{D}(\ell)\vc{e}
+ \sum_{\ell=0}^{\infty}
\log\Big(1+{\ell \over k+\rme}\Big)\vc{D}(\ell)\vc{e}
\\
&=&\sum_{\ell=1}^{\infty}
\log\Big(1+{\ell \over k+\rme}\Big)\vc{D}(\ell)\vc{e},
\qquad k \in \bbN.
\end{eqnarray*}
It follows from this equation and (\ref{yk-1}) that
\begin{eqnarray}
\vc{y}(k)
&=& k\mu\log\Big(1-{1 \over k+\rme}\Big)\vc{e}
\nonumber
\\
&& {} 
+\sum_{\ell=1}^{\infty}\log\Big(1+{\ell \over k+\rme}\Big)\vc{D}(\ell)\vc{e},
\quad k \in \bbN.
\label{yk-2}
\end{eqnarray}

We estimate the two terms in the right hand side of (\ref{yk-2}). It
is easy to see that
\[
\lim_{k\to\infty}k\log\Big(1-{1 \over k+\rme}\Big)
= - 1,
\]
which shows that there exists some $\delta > 0$ such that
\begin{eqnarray}
k\mu\log\Big(1-{1 \over k+\rme}\Big)
\le -2\delta \quad \mbox{for all $k \in \bbN$}.
\label{add-hm-160731-01}
\end{eqnarray}
It also follows from (\ref{stability-cond}) that, for all
$k\in\bbN$,
\begin{eqnarray}
\lefteqn{
\sum_{\ell=1}^{\infty}\log\Big(1+{\ell \over k+\rme}\Big)\vc{D}(\ell)\vc{e}
}
\quad &&
\nonumber
\\
&\leq&
\sum_{\ell=1}^{\infty}\log(\ell+\rme)\vc{D}(\ell)\vc{e}
\le C\vc{e}.
\label{add-hm-160731-02}
\end{eqnarray}
Applying (\ref{add-hm-160731-01}) and (\ref{add-hm-160731-02}) to
(\ref{yk-2}), we obtain
\begin{eqnarray}
\vc{y}(k)
&\le& -2\delta \vc{e} + C\vc{e},
\qquad k \in \bbN.
\label{add-hm-160731-03}
\end{eqnarray}
In addition, by dominated convergence theorem, we have
\[
\lim_{k\to\infty}
\sum_{\ell=1}^\infty\log\Big(1+{\ell \over k+\rme}\Big)\vc{D}(\ell)\vc{e}
=\vc{0},
\]
and thus there exists some $K := K_{\delta} \in \bbZ_+$ such that, for
all $k=K+1,K+2,\dots$,
\[
\sum_{\ell=1}^\infty\log\Big(1+{\ell \over k+\rme}\Big)\vc{D}(\ell)\vc{e}
\le \delta \vc{e}.
\]
Combining this inequality, (\ref{yk-2}) and (\ref{add-hm-160731-01}),
we obtain
\begin{eqnarray}
\vc{y}(k)
&\le& -\delta \vc{e}, \qquad k=K+1,K+2,\dots.
\label{add-hm-160731-04}
\end{eqnarray}
Consequently, (\ref{lem1}) follows from (\ref{y0}),
(\ref{add-hm-160731-03}) and (\ref{add-hm-160731-04}).  \qed

\subsection{Necessary Condition}
\label{sec-Necessary}

The following lemma shows that the logarithmic moment condition (\ref{stability-cond}) holds if
$\vc{Q}$ is ergodic, i.e., $\vc{Q}$ has the unique stationary
probability vector.
\begin{lem}
If $\vc{Q}$ has the unique stationary probability vector
$\vc{\pi}=(\pi(k,i))_{(k,i)\in\bbF}$, then (\ref{stability-cond})
holds for some finite constant $C> 0$.
\end{lem}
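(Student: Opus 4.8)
The plan is to establish the necessary condition by a contradiction argument, assuming stationarity but violation of the logarithmic moment condition, and deriving that the stationary mean of $\log(L(t)+\rme)$ would have to be infinite, contradicting the existence of a stationary distribution. The key observation is that if $\vc{\pi}$ is the stationary probability vector, then $\vc{\pi}\vc{Q}=\vc{0}$, and applying this to the test vector $\vc{v}=(\log(k+\rme))_{(k,i)\in\bbF}$ used in the sufficiency proof should yield $\vc{\pi}\vc{Q}\vc{v}=0$. The quantity $\vc{\pi}\vc{Q}\vc{v}=\sum_{k}\vc{\pi}(k)\vc{y}(k)$, where $\vc{y}(k)$ is exactly the drift computed in Lemma~\ref{lem-sufficiency}, so I would reuse the expression (\ref{yk-2}) for $\vc{y}(k)$ directly.

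First I would write $0=\vc{\pi}\vc{Q}\vc{v}$ by summing the stationary drift, being careful to justify the interchange of summation (this requires the Lyapunov-type argument that the sum converges; since $\vc{v}$ grows only logarithmically and $\vc{\pi}$ is summable with geometric-type tails under stationarity, this should be controllable). Then I would substitute (\ref{yk-2}) to obtain
\begin{eqnarray*}
0 &=& \sum_{k=1}^{\infty}\vc{\pi}(k)\,k\mu\log\Big(1-{1 \over k+\rme}\Big)\vc{e}
\\
&& {} + \sum_{k=1}^{\infty}\vc{\pi}(k)\sum_{\ell=1}^{\infty}\log\Big(1+{\ell \over k+\rme}\Big)\vc{D}(\ell)\vc{e}
 + \vc{\pi}(0)\vc{y}(0).
\end{eqnarray*}
The first sum is bounded below (each term is bounded by a constant times $\vc{\pi}(k)\vc{e}$, since $k\log(1-1/(k+\rme))\to-1$), hence finite and negative. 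Rearranging, the double sum involving the $\vc{D}(\ell)$ must therefore be finite, which forces a bound on $\sum_{\ell}\log(\ell+\rme)\vc{D}(\ell)\vc{e}$ weighted against the stationary distribution.

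The crux is extracting the unweighted bound (\ref{stability-cond}) from the weighted finiteness. The idea is that for small $k$ (in particular $k=0$ or bounded $k$), the factor $\log(1+\ell/(k+\rme))$ is comparable to $\log(\ell+\rme)$ up to a constant, so positivity of $\vc{\pi}(k)$ (guaranteed by Remark~\ref{rem-Q}) lets me isolate $\sum_{\ell}\log(\ell+\rme)\vc{D}(\ell)\vc{e}<\infty$ componentwise. More precisely, fixing any single state with $\pi(0,i)>0$ and using that the inner sum at $k=0$ must be finite (otherwise the double sum diverges), I obtain $\sum_{\ell=1}^{\infty}\log(1+\ell/\rme)\vc{D}(\ell)\vc{e}<\infty$, and since $\log(1+\ell/\rme)$ and $\log(\ell+\rme)$ differ by a bounded factor for large $\ell$, this is equivalent to (\ref{stability-cond}).

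The main obstacle I anticipate is rigorously justifying the identity $\vc{\pi}\vc{Q}\vc{v}=0$ and the associated interchange of infinite sums, since $\vc{v}$ is unbounded; this is the standard subtlety in using an unbounded Lyapunov function to derive a necessary condition. I would address it by a truncation argument: apply the stationary balance to the bounded test functions $\vc{v}\wedge M$ (or equivalently sum over a finite window and let the window grow), verify that the negative drift terms dominate so that monotone or dominated convergence applies, and only then pass to the limit. This mirrors Cong~\cite{Cong94}'s approach, so I expect the structure to go through once the convergence bookkeeping is handled.
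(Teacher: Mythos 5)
Your proposal is correct in substance but takes a genuinely different route from the paper. The paper works with the generating function $\wh{\vc{\pi}}(z)=\sum_{k}z^{k}\vc{\pi}(k)$: it converts the balance equations into the differential identity (\ref{nece-dif}), divides by $1-z$, integrates over $(0,1)$, and uses $\int_0^1 (1-z^k)/(1-z)\,\rmd z=\sum_{\ell=1}^k 1/\ell \ge \log(k+1)$ together with $\wh{\vc{\pi}}(z)\ge\vc{\pi}(0)>\vc{0}$ to extract the logarithmic moment bound with an explicit constant. You instead apply the stationary measure directly to the drift $\vc{y}(k)$ already computed in (\ref{yk-2}) and argue that $\vc{\pi}\vc{Q}\vc{v}=0$ forces the arrival contribution to be finite; keeping only the $k=0$ term (all arrival terms are nonnegative) and using $\vc{\pi}(0)>\vc{0}$ then isolates the unweighted log-moment, since $\log(1+\ell/\rme)$ and $\log(\ell+\rme)$ differ by a bounded amount. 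Both arguments rest on the same balance between the uniformly bounded downward drift from services and the logarithmic upward jumps from batches, but yours must confront the interchange-of-summation issue for the unbounded test function head-on, whereas the paper sidesteps it by working inside the unit disk, where the power series converge absolutely and Tonelli handles the nonnegative integrands. Your proposed repair---truncate to $\vc{v}\wedge\log(M+\rme)\vc{e}$, for which the stationary drift identity holds exactly because the relevant double sums are absolutely convergent, then let $M\to\infty$ by monotone convergence on the $k=0$ arrival term---does close the gap. One caveat: your side remark that $\vc{\pi}$ has ``geometric-type tails under stationarity'' is false in general (the stationary queue length inherits heavy tails from the batch-size distribution, which is precisely why Breuer's finite-mean condition is strictly stronger than the log-moment condition), so the untruncated identity $\vc{\pi}\vc{Q}\vc{v}=0$ cannot be asserted on those grounds and the truncation step is essential rather than optional. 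What your approach buys is unity with the sufficiency proof, reusing the same Lyapunov function and drift computation for both directions; what the paper's buys is a self-contained calculation with an explicit constant and no delicate limit interchange.
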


\begin{proof}
Let $\vc{\pi}(k) = (\pi(k,i))_{i\in\bbD}$ for $k \in
\bbZ_+$, which is positive (see Remark~\ref{rem-Q}). It follows from
the global balance equation $\vc{\pi}\vc{Q} = \vc{0}$ that
\[
k \mu\vc{\pi}(k)
=(k + 1)\mu \vc{\pi}(k + 1)
+ \sum_{\ell=0}^k \vc{\pi}(k - \ell)\vc{D}(\ell),\quad k \in \bbZ_+.
\]
Multiplying the above equation by $z^k$ and taking the sum over
$k\in\bbZ_+$, we obtain, for $|z|\leq 1$,
\begin{eqnarray*}
\mu\sum_{k=1}^{\infty} k z^k \vc{\pi}(k)
&=& \mu\sum_{k=0}^{\infty} (k+1) z^k \vc{\pi}(k+1)
\nonumber
\\
&& {} +
\sum_{k=0}^{\infty}\sum_{\ell=0}^k z^k\vc{\pi}(k-\ell)\vc{D}(\ell),
\end{eqnarray*}
which leads to
\begin{equation}
\mu z{\rmd \over \rmd z}\wh{\vc{\pi}}(z)
= \mu{\rmd \over \rmd z}\wh{\vc{\pi}}(z)
+ \wh{\vc{\pi}}(z)\sum_{k=0}^{\infty}z^k \vc{D}(k),
\quad |z| \le 1,
\label{add-160801-01}
\end{equation}
where $\wh{\vc{\pi}}(z) = \sum_{k=0}^{\infty}z^k\vc{\pi}(k)$.
Postmultiplying both sides of (\ref{add-160801-01}) by $\vc{e}$ and
rearranging the terms of the resulting equation, we have
\begin{eqnarray}
\mu (1-z){\rmd \over \rmd z} \wh{\vc{\pi}}(z)\vc{e}
&=& - \wh{\vc{\pi}}(z)\sum_{k=0}^{\infty}z^k\vc{D}(k)\vc{e}
\nonumber
\\
&=& \wh{\vc{\pi}}(z)\sum_{k=1}^{\infty}(1-z^k) \vc{D}(k)\vc{e},~~~|z| \le 1,
\qquad
\label{nece-dif}
\end{eqnarray}
where we use $\sum_{k=0}^{\infty}\vc{D}(k)\vc{e} = \vc{0}$ in the
second equality.  Furthermore, it follows from (\ref{nece-dif}) that
\[
\mu {\rmd \over \rmd z}\wh{\vc{\pi}}(z)\vc{e}
= \wh{\vc{\pi}}(z) \sum_{k=1}^{\infty} {1-z^k \over 1-z} \vc{D}(k)\vc{e}.
\]
Integrating both sides of this equation over $z\in (0,1)$ and using
$\wh{\vc{\pi}}(z) \ge \wh{\vc{\pi}}(0) = \vc{\pi}(0)$, we have
\begin{eqnarray}
\mu \{\wh{\vc{\pi}}(1) - \vc{\pi}(0)\}\vc{e}
&=& 
\sum_{k=1}^{\infty} 
\int_0^1 {1-z^k \over 1-z} \wh{\vc{\pi}}(z) \rmd z \cdot \vc{D}(k)\vc{e}
\nonumber
\\
&\ge& \vc{\pi}(0)  \sum_{k=1}^{\infty}\vc{D}(k)\vc{e} 
\int_0^1 {1-z^k \over 1-z} \rmd z.
\label{nece-ineq-0}
\end{eqnarray}
Note here that
\begin{eqnarray*}
\wh{\vc{\pi}}(1)\vc{e} 
&=& 1,
\\
\int_0^1{1-z^k \over 1-z}\rmd z
&=& \sum_{\ell=1}^k{1\over \ell}
\ge \log(k+1)
\nonumber
\\
&\ge& \log(k+\rme){\log 2 \over \log (1 + \rme)},
\quad k \in \bbN.
\end{eqnarray*}
Substituting these into (\ref{nece-ineq-0}), we obtain
\begin{eqnarray}
\lefteqn{
\vc{\pi}(0) 
\sum_{k=1}^{\infty} \log(k+\rme) \vc{D}(k)\vc{e}
}
\quad &&
\nonumber
\\
&\le& {\mu\log (1 + \rme) \over \log2} \{1 - \vc{\pi}(0)\vc{e} \}.
\label{add-160801-02}
\end{eqnarray}
Since $\vc{\pi}(0) > \vc{0}$ and $0 < \vc{\pi}(0)\vc{e} < 1$ (see
Remark~\ref{rem-Q}), the inequality (\ref{add-160801-02}) completes
the proof.
\end{proof}

\section{Extension to multiclass case}
\label{sec-Multi}

In this section, we consider an infinite-server queue with a
multiclass batch Markovian arrival process (MBMAP) and class-dependent
exponential service times. We assume that arriving customers are
classified into $K$ classes and the set of class indices is denoted by
$\bbK := \{1,2,\dots,K\}$. For each $\nu\in\bbK$, the service times of class $\nu$ customers are i.i.d.~with the exponential distribution having mean
$1/\mu_{\nu}\in (0,\infty)$.

The MBMAP is an extension of the BMAP described in
Section~\ref{sec-Model}. As in Section~\ref{sec-Model}, the MBMAP has
the background continuous-time Markov chain $\{J(t);t\geq 0\}$
with state space $\bbD$ and irreducible infinitesimal generator
$\vc{D}$.  For $\nu \in \bbK$, let $N_{\nu}(t)$, $t \ge 0$, denote
the total number of class $\nu$ customers who arrive from the MBMAP during the time
interval $(0,t]$, where $N_{\nu}(0)=0$.  Let
$N(t) = \sum_{\nu\in\bbK} N_{\nu}(t)$ for $t \ge 0$. We then assume
that, for $i,j\in\bbD$,
\begin{eqnarray*}
\lefteqn{
\PP(N(t+\Delta t) - N(t)=0,J(t+\Delta t)=j\ |\ J(t)=i)
}\quad&&
\\
&=&\left\{
\begin{array}{ll}
1+D_{i,i}(0)\Delta t + o(\Delta t),	& i=j\in\bbD,
\\
D_{i,j}(0)\Delta t + o(\Delta t),	& \mbox{otherwise},
\end{array}
\right.
\end{eqnarray*}
where $\vc{D}(0):=(D_{i,j}(0))_{i,j\in\bbD}$ is a diagonally dominant matrix with negative diagonal and nonnegative off-diagonal elements.
We also assume that, for $\nu \in \bbK$, $k \in \bbN$ and $i,j\in\bbD$,
\begin{eqnarray}
&&
\PP(N_{\nu}(t+\Delta t)-N_{\nu}(t)=k,J(t+\Delta t)=j \mid J(t)=i)
\nonumber
\\
&& \qquad {} = 
D_{\nu,i,j}(k)\Delta t + o(\Delta t),
\label{cond-D_{nu}(k)}
\end{eqnarray}
where $\vc{D}_{\nu}(k):=(D_{\nu,i,j}(k))_{i,j\in\bbD}$, $\nu\in\bbK$,
$k\in\bbN$, is a nonnegative matrix such that $\vc{D}(0) +
\sum_{\nu\in\bbK}\sum_{k=1}^{\infty} \vc{D}_{\nu}(k)$ is equal to the
infinitesimal generator of the background Markov chain $\{J(t)\}$,
i.e.,
\begin{equation}
\vc{D}(0)
+ \sum_{\nu\in\bbK}\sum_{k=1}^{\infty} \vc{D}_{\nu}(k) = \vc{D}.
\label{eqn-sum-D_{nu}(k)}
\end{equation}
It follows from (\ref{cond-D_{nu}(k)}) and (\ref{eqn-sum-D_{nu}(k)})
that the classes of the customers in a batch are same and thus their
service times are independently distributed with the same exponential distribution.

To avoid triviality, we assume that
\[
\sum_{k=1}^{\infty} \vc{D}_{\nu}(k)\vc{e}\neq\vc{0}\quad \mbox{for all $\nu \in \bbK$}.
\]
As a result, the MBMAP is characterized by $\{\vc{D}(0),
\vc{D}_{\nu}(k);\nu \in \bbK, k\in \bbN\}$.  In what follows, we
denote the MBMAP described above by MBMAP $\{\vc{D}(0),
\vc{D}_{\nu}(k);\nu \in \bbK, k\in \bbN\}$. In addition, we denote the
multiclass infinite-server queue described above by
MBMAP${}_K$/M${}_K$/$\infty$, where the subscript ``$K$" represents
the number of classes.

Let $\vc{L}(t) = (L_1(t),L_2(t),\dots,L_K(t))$ for $t \ge 0$, where
$L_{\nu}(t)$ denotes the number of class $\nu$ customers in the system
at time $t$. It then follows that the joint stochastic process
$\{(\vc{L}(t),J(t)); t \ge 0\}$ is an irreducible Markov chain with
state space $\bbZ_+^K \times \bbD$.
\begin{thm}\label{thm-multiclass}
The Markov chain $\{(\vc{L}(t),J(t))\}$ is ergodic if and only if
there exists some finite constant $C > 0$ such that
\begin{eqnarray}
\sum_{k=1}^{\infty}\log(k+\rme) \vc{D}_{\ast}(k)\vc{e} 
\le C\vc{e},
\label{stability-cond-multiclass}
\end{eqnarray}
where $\vc{D}_{\ast}(k) = \sum_{\nu \in \bbK} \vc{D}_{\nu}(k)$ for $k
\in \bbN$.
\end{thm}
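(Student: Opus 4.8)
The plan is to \emph{sandwich} the multiclass queue between two single-class BMAP/M/$\infty$ queues by a stochastic coupling and then invoke Theorem~\ref{cond-batch}. The crucial observation is that the condition (\ref{stability-cond}) in Theorem~\ref{cond-batch} does \emph{not} depend on the service rate $\mu \in (0,\infty)$; hence, whatever positive rate we assign, the pooled arrival process $\{\vc{D}(0),\vc{D}_{\ast}(k)\}$ yields a BMAP/M/$\infty$ queue whose stability criterion is exactly (\ref{stability-cond-multiclass}). Write $\mu_{\min}=\min_{\nu\in\bbK}\mu_{\nu}$ and $\mu_{\max}=\max_{\nu\in\bbK}\mu_{\nu}$. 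Because (\ref{eqn-sum-D_{nu}(k)}) forces every batch to consist of customers of a single class, merging all classes produces a genuine BMAP in which batches of size $k$ arrive at rate $\vc{D}_{\ast}(k)=\sum_{\nu\in\bbK}\vc{D}_{\nu}(k)$; if, in addition, all customers are served at a \emph{common} rate $\mu$, the total-count process together with the background state is Markov and is precisely the BMAP/M/$\infty$ queue of Section~\ref{sec-Model} driven by $\{\vc{D}(0),\vc{D}_{\ast}(k)\}$, which is irreducible by the nontriviality assumption and the reasoning of Remark~\ref{rem-Q}.

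First I would build a coupling on a common probability space. Both the original MBMAP${}_K$/M${}_K$/$\infty$ queue and an auxiliary pooled BMAP/M/$\infty$ queue are driven by the \emph{same} realization of the background chain $\{J(t)\}$ and the \emph{same} batch arrivals; each arriving customer is equipped with a single uniform random variable from which its exponential service time is generated by inversion. Since $\mu_{\min}\le\mu_{\nu}\le\mu_{\max}$, the common uniform yields, for each customer, a residence time in the $\mu_{\min}$-system that is at least its residence time in the original system, which is in turn at least its residence time in the $\mu_{\max}$-system. Writing $L^{\dagger}(t)$ and $L^{\ddagger}(t)$ for the pooled total counts at rates $\mu_{\min}$ and $\mu_{\max}$, this gives the pathwise sandwich $L^{\ddagger}(t)\le\sum_{\nu\in\bbK}L_{\nu}(t)\le L^{\dagger}(t)$ when all systems start empty.

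For \textbf{sufficiency}, assume (\ref{stability-cond-multiclass}). Then the pooled queue at rate $\mu_{\min}$ satisfies (\ref{stability-cond}), so by Theorem~\ref{cond-batch} the chain $\{(L^{\dagger}(t),J(t))\}$ is ergodic and $\{L^{\dagger}(t)\}_{t\ge0}$ is tight. The bound $\sum_{\nu}L_{\nu}(t)\le L^{\dagger}(t)$ forces tightness of $\{\sum_{\nu}L_{\nu}(t)\}$, and since each sub-level set $\{\vc{k}\in\bbZ_+^K:\sum_{\nu}k_{\nu}\le N\}\times\bbD$ is finite, the time-$t$ laws of $\{(\vc{L}(t),J(t))\}$ are tight on $\bbZ_+^K\times\bbD$. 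Because this chain is irreducible and non-explosive (the number of arrivals, and hence of transitions, in any finite interval is a.s.\ finite), tightness yields a stationary distribution as a Ces\`aro limit of the time-$t$ laws, equivalently ergodicity. For \textbf{necessity}, assume the multiclass chain is ergodic; then $\{\sum_{\nu}L_{\nu}(t)\}$ is tight, hence so is $\{L^{\ddagger}(t)\}$ by the reverse bound, and the same tightness-plus-irreducibility argument shows that the pooled queue at rate $\mu_{\max}$ is ergodic. Applying the necessity part of Theorem~\ref{cond-batch} to that queue delivers (\ref{stability-cond-multiclass}).

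The main obstacle I anticipate is the passage from a pathwise bound on the one-dimensional total count to ergodicity of the $K$-dimensional chain. The service-time coupling is routine for infinite-server systems, but care is needed to (i) verify that the pooled process is \emph{exactly} the BMAP/M/$\infty$ queue governed by $\{\vc{D}(0),\vc{D}_{\ast}(k)\}$, which hinges on the single-class-per-batch structure enforced by (\ref{eqn-sum-D_{nu}(k)}); and (ii) justify that finiteness of the total-count sub-level sets upgrades tightness of $\sum_{\nu}L_{\nu}(t)$ to tightness on $\bbZ_+^K\times\bbD$ and thence to the existence of a stationary distribution for an irreducible, non-explosive CTMC. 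Establishing non-explosion and the tightness-to-ergodicity implication cleanly is where the bulk of the rigor will lie.
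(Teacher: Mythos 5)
Your proposal is correct and matches the paper's proof in all essentials: the same service-time coupling (inversion of a common uniform per customer) sandwiches the multiclass total count between two pooled single-class BMAP/M/$\infty$ queues with rates $\mu_{\min}$ and $\mu_{\max}$, both driven by BMAP $\{\vc{D}(0),\vc{D}_{\ast}(k)\}$, and Theorem~\ref{cond-batch} is then applied to both, exploiting that condition (\ref{stability-cond}) does not involve the service rate. The only (immaterial) difference is the final step converting the pathwise bound into ergodicity of the $K$-dimensional chain: the paper argues via infinitely many returns to the empty state with finite mean recurrence time, while you argue via tightness of the time-$t$ laws combined with irreducibility and non-explosion; both are standard and valid.
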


\begin{rem}
Theorem~\ref{thm-multiclass} is a generalization of
\cite[Lemma~2]{Cong94}, which presents a necessary and sufficient
condition for the stability of a multiclass infinite-server queue with
batch Poisson arrivals and class-dependent exponential service times.
\end{rem}

{\sc Proof of Theorem~\ref{thm-multiclass}.~\,} Besides the original
MBMAP${}_K$/M${}_K$/$\infty$ queue, we consider two
MBMAP${}_K$/M${}_K$/$\infty$ queues, denoted by Queues 1 and 2,which
are fed by the same arrival process as that of the original queue,
i.e., fed by MBMAP $\{\vc{D}(0), \vc{D}_{\nu}(k);\nu \in \bbK, k\in
\bbN\}$. In Queue 1 (resp.~2), all the service times are i.i.d.~with
an exponential distribution having mean $1/\mu_{\min}$
(resp.~$1/\mu_{\max}$), where
\[
\mu_{\min} = \min_{\nu\in\bbK}\mu_{\nu},
\qquad
\mu_{\max} =
\min_{\nu\in\bbK}\mu_{\nu}.
\]
Clearly, Queues 1 and 2 can be considered single-class BMAP/M/$\infty$
queues when the class of customers are ignored, where the arrival
process is reduced to BMAP $\{\vc{D}(0),\ \vc{D}_{\ast}(k); k \in
\bbN\}$.

Let $|\vc{L}(t)| = \sum_{\nu\in\bbK}L_{\nu}(t)$ for $t \ge 0$, which
denotes the total number of customers in the system of the original
MBMAP${}_K$/M${}_K$/$\infty$ queue at time $t$.  For $i=1,2$, let
$L^{(i)}(t)$, $t \ge 0$, denote the total number of customers in the
system of Queue $i$ at time $t$. From the assumption of Queues 1 and
2, we can construct the three joint processes $\{(\vc{L}(t),J(t));t
\ge 0\}$, $\{(L^{(1)}(t),J(t));t \ge 0\}$, $\{(L^{(2)}(t),J(t));t \ge
0\}$ in a common probability space such that the following pathwise
ordered relation holds:
\begin{equation}
L^{(2)}(t) \le |\vc{L}(t)| \le L^{(1)}(t)\quad \mbox{for all $t \ge 0$},
\label{ordered-property}
\end{equation}
which is proved in Appendix~\ref{proof-pathwise-ordered}.

It should be noted that $\{(L^{(1)}(t),J(t))\}$ and
$\{(L^{(2)}(t),J(t))\}$ are Markov chains of the same type as
$\{(L(t),J(t))\}$ discussed in the previous section. It thus follows
from Theorem~\ref{cond-batch} that (\ref{stability-cond-multiclass})
holds if and only if $\{(L^{(1)}(t),J(t))\}$ and
$\{(L^{(2)}(t),J(t))\}$ are ergodic.

We now suppose that $\{(L^{(1)}(t),J(t))\}$ is ergodic. It then
follows from (\ref{ordered-property}) that $\{L^{(1)}(t)\}$ and thus
$\{|\vc{L}(t)|\}$ take the value of zero infinitely many times w.p.1
and the mean recurrence time to state 0 is finite (see, e.g.,
\cite[Chapter 8, Definitions~5.1, 5.2 and 5.4]{Brem99}). Therefore,
$\{(\vc{L}(t),J(t))\}$ is ergodic.

On the other hand, we suppose that $\{(L^{(2)}(t),J(t))\}$ is not
ergodic, i.e., is transient or null-recurrent. Note that if
$\{(L^{(2)}(t),J(t))\}$ is transient then $\{L^{(2)}(t)\}$ and thus
$\{|\vc{L}(t)|\}$ take the value of zero, at most, finitely many times
with some positive probability. Note also that if
$\{(L^{(2)}(t),J(t))\}$ is null-recurrent then the mean recurrence
times to state 0 of $\{L^{(2)}(t)\}$ and thus $\{|\vc{L}(t)|\}$ are
infinite. Therefore, in both cases, $\{(\vc{L}(t),J(t)); t \ge 0\}$ is
not ergodic.

As a result, the above argument shows that
(\ref{stability-cond-multiclass}) holds if and only if
$\{(\vc{L}(t),J(t))\}$ is ergodic.  \qed

\section{Conclusions}
\label{Contribution}

In this paper, we have shown that the BMAP/M/$\infty$ queue is stable
if and only if the logarithms of the sizes of arriving batches have a
finite mean. We also have extended this result to an infinite-server
queue with the MBMAP and class-dependent exponential service times.

We expect that the stability condition of this paper holds for a more
general infinite-server queue with the MBMAP and class-dependent and
phase-type service times, which would be proved in the same way as the
model considered in this paper.  It should be noted that the set of
phase-type distributions is dense in the set of distribution on
$[0,\infty)$ (see \cite{Asmu93}). Thus, we can also conjecture that
  the stability condition of this paper is extended to an
  infinite-server queue with the MBMAP and class-dependent and
  light-tailed service times. This problem is challenging because the
  joint queue length process is not necessarily Markovian unlike the
  queues considered in this paper.


\bibliographystyle{plain}
\bibliography{hm-160809}  

\appendix

\section{Proof of the pathwise ordered relation}\label{proof-pathwise-ordered}

Let $T_n$, $n \in \bbN$, denote the $n$th arrival time of batches from
MBMAP $\{\vc{D}(0), \vc{D}_{\nu}(k);\nu \in \bbK, k\in \bbN\}$, where
\[
0 < T_1 < T_2 < \cdots.
\]
Let $c_n$ and $B_n$, $n \in \bbN$, denote the class and batch size,
respectively, of the batch arriving at time $T_n$. Furthermore, let
$\{U_m;m\in\bbN\}$ denote a sequence of i.i.d.\ random variables
with a uniform distribution on the interval $(0,1)$. We then
define $S_m$, $\ol{S}_m$ and $\ul{S}_m$, $m \in \bbN$, as random
variables such that, for $A_{n-1}+1 \le m \le A_n$ and $n \in \bbN$,
\begin{eqnarray}
S_m 
&=& -{1 \over \mu_{c_n}}\log U_m,
\label{defn-S_m}
\\
\ol{S}_m 
&=& -{1 \over \mu_{\min}}\log U_m,
\label{defn-ol{S}_m}
\\
\ul{S}_m 
&=& -{1 \over \mu_{\max}}\log U_m,
\label{defn-ul{S}_m}
\end{eqnarray}
where $A_0 = 0$ and $A_n = \sum_{k=1}^n B_k$ for $n \in \bbN$. It
follows from (\ref{defn-S_m})--(\ref{defn-ul{S}_m}) that
\begin{align}
\PP(S_m \le x)
&= 1 - \exp\{-\mu_{c_n} x\}, & x & \ge 0,
\label{dist-S_m}
\\
\PP(\ol{S}_m \le x) 
&= 1 - \exp\{-\mu_{\min} x\}, & x & \ge 0,
\\
\PP(\ul{S}_m \le x) 
&= 1 - \exp\{-\mu_{\max} x\}, & x & \ge 0.
\label{dist-ul{S}_m}
\end{align}
In addition, since $\mu_{\min} \le \mu_{c_n} \le \mu_{\max}$, we have
\begin{equation}
\ul{S}_m \le S_m \le \ol{S}_m,\qquad m \in \bbN.
\label{ineqn-S_m}
\end{equation}

Based on (\ref{dist-S_m})--(\ref{dist-ul{S}_m}), we assume that
$\{S_m;A_{n-1}+1 \le m \le A_n\}$, $\{\ol{S}_m;A_{n-1}+1 \le m \le
A_n\}$ and $\{\ul{S}_m;A_{n-1}+1 \le m \le A_n\}$ are the service
times of the customers in the $n$th batch arriving at the original
MBMAP${}_K$/M${}_K$/$\infty$ queue, Queues 1 and 2, respectively. We
then fix $|\vc{L}(t)|$, $L^{(1)}(t)$ and $L^{(2)}(t)$, $t \ge 0$ such
that
\begin{eqnarray}
|\vc{L}(t)|
&=& \sum_{n=1}^{\infty}\sum_{m=A_{n-1}+1}^{A_n} 
I(T_n \le t < T_n+S_m),
\label{defn-L(t)}
\\
L^{(1)}(t)
&=& \sum_{n=1}^{\infty}\sum_{m=A_{n-1}+1}^{A_n}
I(T_n \le t < T_n+\ol{S}_m),
\label{defn-L^{(1)}(t)}
\\
L^{(2)}(t)
&=& \sum_{n=1}^{\infty}\sum_{m=A_{n-1}+1}^{A_n}
I(T_n \le t < T_n+\ul{S}_m),
\label{defn-L^{(2)}(t)}
\end{eqnarray}
where $I(\chi)$ denotes the indicator function of any event $\chi$. It
is easy to see that $\{|\vc{L}(t)|\}$, $\{L^{(1)}(t)\}$ and
$\{L^{(2)}(t)\}$ can be considered the total queue length processes of
the original MBMAP${}_K$/M${}_K$/$\infty$ queue, Queues 1 and 2,
respectively, which are fed by the common MBMAP. Furthermore, combining
(\ref{ineqn-S_m}) with (\ref{defn-L(t)})--(\ref{defn-L^{(2)}(t)}), we
obtain the pathwise ordered relation (\ref{ordered-property})
between $\{|\vc{L}(t)|\}$, $\{L^{(1)}(t)\}$ and $\{L^{(2)}(t)\}$.

\end{document}